\documentclass[12pt]{amsart}
\usepackage{amsmath, amsfonts, amssymb, euscript, graphicx, verbatim, amsthm,xy}                       

\usepackage[english]{babel}

\textwidth=125mm 
\textheight=195mm

\DeclareMathOperator{\pa}{PAut}

\DeclareMathOperator{\dom}{dom}
\DeclareMathOperator{\ran}{ran}
\DeclareMathOperator{\rank}{rank}
\DeclareMathOperator{\rk}{rk}

\newcommand{\pwr}{\,{\wr_p}\,}
\newcommand{\set}[1]{\left\{#1\right\}}
\newcommand{\ra}{\rightarrow}
\newcommand{\isd}{\mathcal{I}_d}
\newcommand{\is}[1]{\mathcal{I}_{#1}}
\newcommand{\abs}[1]{\left\vert#1\right\vert}
\newcommand{\pn}[1]{\mathcal{P}_{#1}}

\theoremstyle{plain}
\newtheorem{prop}{Proposition}[section]

\newtheorem{lem}{Lemma}[section]
\newtheorem{thm}{Theorem}[section]
\theoremstyle{definition}
\newtheorem{defn}{Definition}[section]
\newtheorem{example}{Example}[section]
\newtheorem{rem}{Remark}[section]

\begin{document}
\title{Spectrum of partial automorphisms of regular rooted tree}
\author{Eugenia Kochubinska}

\begin{abstract}
We study properties of eigenvalues of a matrix associated with a randomly chosen partial automorphism of a regular rooted tree. 
We show that asymptotically, as the  numbers of levels goes to infinity, the fraction of non-zero eigenvalues converges to zero in probability.
\end{abstract}
\keywords{partial automorphism, inverse semigroup, eigenvalues, delta measure, random element, uniform distribution}
\subjclass[2010]{20M18, 20M20, 05C05}
\maketitle
\section{Introduction}

We study spectral properties of semigroup of partial automorphisms of a regular $n$-level rooted tree. Here  by a partial automorphism we mean root-preserving  injective tree homomorphism defined on a connected subtree. This semigroup was studied, in particular, in \cite{comb, edm}.

 For the group of partial automorphisms of a regular rooted tree, a similar question was studied in \cite{Evans}. Namely, it was shown that the spectral measure $\Theta$ of a randomly chosen element $\sigma$ of $n$-fold wreath product of symmetric group $\mathcal{S}_d$ converges weakly in probability to the normal Lebesgue measure on the unit circle, i.e, for any trigonometric polynomial $f$,
  $$\lim_{n\to \infty}\mathbb{P}\left\{\int_C f(x)\, \Theta_n(dx)\neq \int f(x)\, \lambda(dx)\right\}=0.$$

 In contrast, for partial automorphisms of a binary rooted tree, in  \cite{eigen-bin} it was shown that the uniform distribution  $\Xi_n$ on eigenvalues of the action matrix converges weakly in probability to $\delta_0$ as $n \to \infty$, where $\delta_0$ is the delta-measure concentrated at $0$. 
 In this article we generalize this result for a regular rooted tree of any degree. Specifically, denote by $B_n=\set{v_i^n\mid i=1, \ldots, d^n}$  the set of vertices of the $n$th level of the $n$-level $d$-regular rooted tree.  To a randomly chosen partial automorphism $y$,  assign the action matrix   $A_y = \left(\mathbf{1}_{\set{x (v_i^n) = v_j^n }}\right)_{i,j=1}^{d^n}.$ 
 Let $$\Xi_n = \frac{1}{d^n} \sum\limits_{k=1}^{d^n} \delta_{\lambda_k}$$ be the uniform distribution on eigenvalues of $A_y$. We show that $\Xi_n$ converges weakly in probability to $\delta_0$ as $n \to \infty$, where $\delta_0$ is the delta-measure concentrated at $0$.
 
 The remaining of the paper is organized as follows. Section 2 contains basic facts on partial wreath product of semigroup and its connection with a semigroup of partial automorphisms of a regular rooted tree. The main result is stated and proved in Section~3.

\section{Preliminaries}\label{sec:basic}
For a set $X=\set{1,2, \ldots, d}$ consider the set $\is{d}$ of all partial bijections.  This set forms an inverse semigroup under natural composition law, namely, $f \circ g:
\dom(f) \cap f^{-1} \dom(g) \ni x \mapsto g(f(x))$ for $f, \; g \in \is{d}$.  Detailed description of this semigroup one can  find in \cite[Chapter 2]{GM}.

Recall the definition of a partial wreath product of semigroups.  Let $S$ be an arbitrary semigroup. For functions $f\colon \dom(f) \ra S$ and $g\colon \dom(g)\ra S$ define 
 the product $fg$  as:
\begin{equation*}
\dom(fg)=\dom(f)\cap\dom(g),\  (fg)(x)=f(x)g(x) \text{\ for all } x\in
\dom(fg).
\end{equation*}
For $a\in \isd, f\colon \dom(f)\ra S$, define $f^a$ as:
\begin{equation*}
\begin{gathered}
(f^a)(x)=f(x^a),\ \dom(f^a)=\{x \in \dom(a); x^a\in \dom(f)\}.
\end{gathered}
\end{equation*}

\begin{defn}
The partial wreath square of semigroup $\isd$ is  the
set $$\set{(f,a)\mid a \in \isd, f\colon  \dom(a)\ra \isd }$$ with
composition defined by
$$(f,a)\cdot (g,b)=(fg^a,ab)$$
Denote it by $\isd \pwr \isd$.
\end{defn}

It is well-known that the partial wreath square of inverse semigroup is an inverse semigroup \cite[Lemmas 2.22 and 4.6]{Meldrum}. We
may recursively define any  partial wreath power of the finite
inverse symmetric semigroup.  Denote by $\pn{n}$ the $n$th partial wreath power of $\is{d}$.

\begin{defn}
The partial wreath $n$-th power of semigroup $\isd$ is defined as
a semigroup $$\pn{n}=\big(\pn{n-1} \big) \pwr \isd =\set{(f,a)\mid a \in \isd, \ f\colon \dom(a)\ra \pn{n-1}}$$ with composition
defined by
$$(f,a) \cdot (g,b)=(fg^a,ab),$$ where  $\pn{n-1}$ is the
partial wreath $(n-1)$-th power of semigroup $\isd$
\end{defn}


\begin{prop}\label{prop:card}\rm{\cite{comb}}
The number of elemets in the semigroup $\pn{n}$ is equal to
\begin{equation}\label{eq:number}
N_n=\sum_{a\in \isd}(N_{n-1})^{\rank (a)}=S(N_{n-1})=\underset{n}{\underbrace{S(S\ldots (S}}(1))\ldots),
\end{equation}
where $S(x)=\sum_{i=1}^d \binom{d}{i}^2
i!x^i$.
\end{prop}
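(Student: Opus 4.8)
The plan is to count the elements of $\pn{n}$ directly from the pair description supplied by the partial wreath construction, and then to reorganise the resulting sum into the stated polynomial recurrence. By definition every element of $\pn{n}=\big(\pn{n-1}\big)\pwr\isd$ is a pair $(f,a)$ with $a\in\isd$ and $f\colon\dom(a)\ra\pn{n-1}$, and distinct pairs represent distinct elements; hence $N_n$ is simply the number of such pairs. I would evaluate it by fixing the top component $a$, counting the admissible maps $f$, and then summing over $a\in\isd$.

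For a fixed $a$ the map $f$ is \emph{unconstrained}: the sole condition is that its domain be exactly $\dom(a)$, after which its values in $\pn{n-1}$ may be prescribed arbitrarily. Since $\abs{\dom(a)}=\rank(a)$ and $N_{n-1}=\abs{\pn{n-1}}$, there are precisely $(N_{n-1})^{\rank(a)}$ such maps, the empty map being the single choice when $\rank(a)=0$. Summing over $a$ yields the first equality $N_n=\sum_{a\in\isd}(N_{n-1})^{\rank(a)}$.

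To reach the polynomial $S$, I would sort $\isd$ by rank. A partial bijection of rank $i$ is assembled from an $i$-element domain, an $i$-element range, and a bijection between them, so $\isd$ contains exactly $\binom{d}{i}^2 i!$ elements of rank $i$. Grouping equal ranks converts the sum into $\sum_{i=0}^{d}\binom{d}{i}^2 i!\,(N_{n-1})^i=S(N_{n-1})$, which is the second equality. The closed form $N_n=\underset{n}{\underbrace{S(S\ldots (S}}(1))\ldots)$ then follows by induction on $n$: the base case $n=1$ is the direct count $N_1=\abs{\isd}=S(1)$, and the step $N_n=S(N_{n-1})$ appends one further copy of $S$, so that $n$ levels produce $n$ nested applications of $S$ to $1$.

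There is no deep obstacle here---the whole argument is bookkeeping organised around the recursive definition. The single delicate point I would watch is the rank-zero contribution: the empty partial bijection produces the summand $(N_{n-1})^0=1$, that is, the constant term $\binom{d}{0}^2 0!=1$, so the grouped sum genuinely starts at $i=0$; retaining this term is exactly what fixes the base value $N_1=S(1)=\abs{\isd}$, whereas discarding it would shift every $N_n$ by an additive constant and break the clean $n$-fold composition formula.
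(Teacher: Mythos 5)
Your counting argument is correct and is the standard one; the paper itself gives no proof of this proposition (it is quoted from \cite{comb}), so there is nothing to diverge from: counting pairs $(f,a)$ with $a$ fixed gives $(N_{n-1})^{\rank(a)}$ choices of $f$, summing over $a$ gives the first equality, and grouping by rank gives the polynomial recurrence. One point needs to be made explicit rather than glossed over. You write that the grouped sum ``genuinely starts at $i=0$'' and treat the constant term $\binom{d}{0}^2 0!=1$ as part of $S$; but the proposition as printed defines $S(x)=\sum_{i=1}^{d}\binom{d}{i}^2 i!\,x^i$, with no constant term. Since $\isd$ contains the empty partial bijection, which contributes the summand $(N_{n-1})^0=1$, what your argument actually establishes is
\[
N_n=\sum_{a\in\isd}(N_{n-1})^{\rank(a)}=1+\sum_{i=1}^{d}\binom{d}{i}^2 i!\,(N_{n-1})^i ,
\]
so with the printed $S$ the second and third equalities of \eqref{eq:number} are off by an additive constant that propagates through the iteration (already for $d=2$ one has $N_1=\abs{\is{2}}=7$ while $S(1)=6$). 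In other words, your proof is a proof of the corrected statement in which the lower limit of the sum defining $S$ is $i=0$; you should say so explicitly instead of silently attributing the $i=0$ term to $S$. This does not affect the rest of the paper, which only invokes the first equality $N_n=\sum_{a\in\isd}(N_{n-1})^{\rank(a)}$ (in the proof of Lemma~\ref{lem:lem2}), and that equality you have proved correctly.
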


\begin{rem}
Let $T$ be an $n$-level $d$-regular rooted tree. We define a partial automorphism of a tree $T$ as an isomorphism  $y: T_1\to T_2$ between its subtrees $T_1$ and $T_2$ containing root. Denote $\dom(y):=T_1$, $\ran (y):= T_2$ domain and image of $y$ respectively. Let $\pa T$ be the set of all partial automorphisms of $T$. Obviously, $\pa T$ forms a semigroup under natural composition law.  It was proved in \cite[Theorem 1]{comb} that the  partial wreath power $\pn{n}$ is isomorphic to $\pa T$. 
\end{rem}

\section{Asymptotic behavior of a spectral measure of a regular rooted tree}

Let $T$ be an $n$-level $d$-regular rooted tree and $\pa T$ be its semigroup of partial automorphisms. We identify $y\in \pn{n}$ with a partial automorphism from $\pa T$.
  Let $B_n$ be the set of vertices of the $n$th level of $T$. It is clear that $\abs{B_n} =d^n$. 

Let us enumerate the vertices of $B_n$ by positive integers from  1 to $d^n$: $$B_n=\set{v_i^n\mid i=1, \ldots, d^n}.$$ To a randomly chosen partial automorphism $y \in \mathcal{P}_n$, we assign the matrix   $$A_y = \left(\mathbf{1}_{\set{y (v_i^n) = v_j^n }}\right)_{i,j=1}^{d^n}.$$ In other words, $(i, j)$th entry of $A_y$ is equal to~1, if a transformation $y$\ maps  $i$ to $j$, and~0, otherwise.
\begin{rem}
In an automorphism group of a tree such a matrix describes completely the action of an automorphism. Unfortunately, for a semigroup this is not the case, see the example below. That is why we cannot use technique developed by Evans in \cite{Evans}. Also, the generalization of result from \cite{eigen-bin} is not straightforward, despite the result is similar.
\end{rem} 
\begin{example}\label{example:matrix} Consider a  binary tree and the partial automorphism  $y \in \mathcal{P}_2$, which  acts in the following way
	$$\begin{xy}
	0;<2.5cm,0cm>:<0cm,2.5cm>::
	(2.2,2)*{v_1^0}="v0";
	(1.2,1)*{v_1^1}="v11" **@{-}; ?(.23)*{}="l";
	"v0"; (3.2,1)*{v_2^1}="v12" **@{-}; ?(.23)*{}="v";
	"v11"; (0.5,0)*{v_1^2} **@{-}; 
	"v11"; (1.9,0)*{v_2^2} **@{-}; 
	"v12"; (2.5,0)*{v_3^2} **@{.}; 
	"v12"; (3.9,0)*{v_4^2} **@{.}; 
	"l"; "v" **@{-} ?>* @{>};  ?<* @{<}
	\end{xy}$$
(dotted lines mean that these edges are not in domain of $y$).	
Then the corresponding matrix for $x$ is
	$$A_{y}=\begin{pmatrix}
	0&0&1&0\\
	0&0&0&1\\
    0&0&0&0\\
    0&0&0&0
	\end{pmatrix}.$$
	
We can see from $A_y$ that $y$  maps $v_1^1$ to $v_2^1$. However, from $A_y$ we cannot infer the action of $y$ on $v_2^1$, in particular, whether $v_2^1$ belongs to the domain of $y$. 
\end{example}

Let $\chi_y(\lambda)$ be the characteristic polynomial of $A_y$ and $\lambda_1, \ldots, \lambda_{d^n}$ be its roots, respecting multiplicity. Denote $$\Xi_n = \frac{1}{d^n} \sum\limits_{k=1}^{d^n} \delta_{\lambda_k}$$ the uniform distribution on eigenvalues of $A_y$.

Our main result is the following theorem.

\begin{thm}\label{thm:main}
For any function $f \in C(D)$, where $D= \{z \in \mathbb{C}\mid |z|\leq 1\}$ is a unit disc, \begin{equation}\label{eq:main}
\int_D f(x)\,\Xi_n (dx) \overset{\mathbb P}{\longrightarrow} f(0),\ \ \ n\rightarrow \infty.\end{equation}
In other words, $\Xi_n$ converges weakly in probability to $\delta_0$ as $n \to \infty$, where $\delta_0$ is the delta-measure concentrated at $0$.\end{thm}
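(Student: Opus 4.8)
The plan is to reduce the weak convergence in \eqref{eq:main} to a statement about the number of \emph{nonzero} eigenvalues of $A_y$, and then to control that number through the wreath recursion that defines $\pn{n}$.

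First I would use that $A_y$ is a partial permutation matrix: since $y$ restricted to $B_n$ is a partial injection, each row and each column of $A_y$ carries at most one $1$. Consequently every nonzero eigenvalue lies on the unit circle, while the remaining eigenvalues equal $0$ and contribute the value $f(0)$. Writing $M=\max_{z\in D}\abs{f(z)}$ (finite, as $f\in C(D)$ and $D$ is compact) and letting $Z_n$ be the number of nonzero eigenvalues, this gives
\[
\abs{\int_D f(x)\,\Xi_n(dx)-f(0)}=\frac{1}{d^n}\,\abs{\sum_{\lambda_k\neq 0}\bigl(f(\lambda_k)-f(0)\bigr)}\le \frac{2M}{d^n}\,Z_n .
\]
Hence it suffices to prove $Z_n/d^n\to 0$ in probability, and by Markov's inequality it is enough to show $\mathbb{E}[Z_n]=o(d^n)$.

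Next I would identify $Z_n$ combinatorially. The functional digraph of the partial injection $y|_{B_n}$ has in- and out-degree at most $1$, so it is a disjoint union of directed paths and cycles; a cycle of length $\ell$ yields a cyclic block whose eigenvalues are the $\ell$-th roots of unity, whereas a path yields a nilpotent block with eigenvalue $0$ only. Thus $Z_n$ equals the number of leaves lying on cycles, that is, the number of $y$-periodic leaves. Because $y$ is a partial bijection we have $A_y^{\,k}=A_{y^k}$, so $\rank A_y^{\,k}=\abs{\dom(y^k)}$; and since no path can feed into a cycle (that would force in-degree $2$), the nested domains satisfy $\abs{\dom(y^{k})}\downarrow Z_n$. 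In particular $Z_n\le \abs{\dom(y^m)}$ for every fixed $m\ge 1$, whence, using vertex-transitivity of the tree symmetries,
\[
\mathbb{E}[Z_n]\le \mathbb{E}\,\abs{\dom(y^m)}=d^n\,\mathbb{P}\set{v_1^n\in\dom(y^m)} .
\]
It therefore suffices to prove $\inf_m\limsup_n \mathbb{P}\set{v_1^n\in\dom(y^m)}=0$.

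To estimate $u_m^{(n)}:=\mathbb{P}\set{v_1^n\in\dom(y^m)}$ I would sample $y=(f,a)$ uniformly by first drawing $a\in\isd$ with weight proportional to $(N_{n-1})^{\rank(a)}$ and then drawing the values $f(i)$, $i\in\dom(a)$, independently and uniformly from $\pn{n-1}$. Since $N_{n-1}\to\infty$ extremely fast and $x S'(x)/\bigl(d\,S(x)\bigr)=1-O(1/x)$, the law of $a$ concentrates on the $d!$ full permutations of $X$, on which it is uniform; moreover $p_n:=\mathbb{P}\set{v_1^n\in\dom(y)}=\prod_{k\le n}q_k$ with $q_k=1-O(1/N_{k-1})$ summable, so $p_n\to p_\infty\in(0,1)$. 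A fixed leaf $v=(i,w)$ lies in $\dom(y^m)$ iff $i\in\dom(a^m)$ and $w\in\dom(g_m)$, where $g_m=f(a^{m-1}i)\circ\cdots\circ f(i)$ is the product of the lower-level maps along the $a$-orbit of $i$. The clean ingredient is that for \emph{independent} uniform $\phi_1,\dots,\phi_m\in\pn{n-1}$, leaf-transitivity gives $\mathbb{P}\set{w\in\dom(\phi_m\circ\cdots\circ\phi_1)}=p_{n-1}^{\,m}$; when the $a$-cycle through $i$ has length $\ge m$ the factors of $g_m$ are genuinely independent and this applies, contributing the geometric factor $p_\infty^{\,m}$ in the limit.

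The hard part, and the main obstacle, is the opposite regime: when the $a$-cycle through $i$ is short (length $\ell<m$) the \emph{same} lower-level maps reappear in $g_m$, destroying independence and making the recursion self-similar. For instance an $a$-fixed point gives $\mathbb{P}\set{w\in\dom(f(i)^m)}=u_m^{(n-1)}$, so $u_m^{(n)}$ refers back to itself one level down. Concretely, the periodic-leaf count obeys $Z_n=\sum_{\text{cycles }C\text{ of }a}\ell_C\,Z_{n-1}(g_C)$, and passing to expectations with $a$ uniform on the full permutations yields $\mathbb{E}[Z_n]\approx \mathbb{E}[Z_{n-1}]+\sum_{\ell\ge 2}W_{n-1}(\ell)$, where $W_{n-1}(\ell)$ is the expected number of periodic leaves of a product of $\ell$ independent uniform elements of $\pn{n-1}$ (and $W_{n-1}(1)=\mathbb{E}[Z_{n-1}]$). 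The crux is to control this coupled family: one must show that the $\ell\ge 2$ channels contribute only geometrically shrinking corrections, so that the periodic-leaf expectation is fed essentially through the single pass-through channel $\ell=1$ and grows only sub-exponentially in $n$. Establishing a uniform bound on $\set{W_n(\ell)}_{\ell=1}^d$ that forces $\mathbb{E}[Z_n]=o(d^n)$ — equivalently $\inf_m\limsup_n u_m^{(n)}=0$ — is where the real work lies; once it is in place, Markov's inequality upgrades $\mathbb{E}[Z_n]/d^n\to 0$ to convergence in probability and yields \eqref{eq:main}.
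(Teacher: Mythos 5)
Your reduction is sound, and up to the point where you identify the nonzero eigenvalues with the periodic leaves and reduce the theorem via Markov's inequality to $\mathbb{E}[Z_n]=o(d^n)$, it coincides with the paper's argument: the paper calls your $Z_n$ the \emph{ultimate rank} $\rk(y)=\#\set{j\colon v_j^n\in\dom y^m\text{ for all }m\ge 1}$ and derives the same cycle decomposition $\rk y=\sum_i c_i\,\rk\bigl(y_{x_{i1}}\cdots y_{x_{ic_i}}\bigr)$ that you write as $Z_n=\sum_C \ell_C\,Z_{n-1}(g_C)$. But your proposal stops exactly at the step that constitutes the actual content of the proof: you acknowledge that controlling the contribution of short cycles, where the same lower-level maps recur in $g_C$ and independence fails, is ``where the real work lies,'' and you do not supply that work. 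As written this is a genuine gap, not a routine verification. Moreover, the detour through $Z_n\le\abs{\dom(y^m)}$ and the double limit $\inf_m\limsup_n u_m^{(n)}$ makes the problem harder than necessary: for fixed $m$ the quantity $u_m^{(n)}$ has no reason to be small, and the interchange of the limits in $m$ and $n$ is precisely what your self-referential recursion $u_m^{(n)}\leftrightarrow u_m^{(n-1)}$ prevents you from justifying.

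The missing idea in the paper is an exact decoupling rather than an independence argument. For the sum of $\rk(y_1\cdots y_c)$ over all $y_1,\dots,y_c\in\pn{n-1}$, write $y_1=e_{y_1}\sigma_{y_1}$, where $e_{y_1}$ is the idempotent on $\dom y_1$ and $\sigma_{y_1}$ is a full automorphism extending $y_1$. Left multiplication by the automorphism $\sigma_{y_1}$ permutes $\pn{n-1}$, so summing over $y_2$ absorbs $\sigma_{y_1}$; then the inclusion $S(e_{y_1}y_2\cdots y_c)\subset\dom y_1\cap S(y_2\cdots y_c)$ peels off $y_1$ at the cost of the factor $\sum_{y_1\in\pn{n-1}}\mathbf{1}_{\set{v\in\dom y_1}}$, which by symmetry over leaves equals $\tfrac{1}{d^{n}}\sum_{y\in\pn{n-1}}\rank(y)\le N_{n-1}/d$. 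Iterating yields $\sum\rk(y_1\cdots y_c)\le (N_{n-1}/d)^{c-1}R_{n-1}$ with $R_{n-1}=\sum_{y\in\pn{n-1}}\rk(y)$ --- i.e.\ your $W_{n-1}(\ell)$ is controlled for every $\ell$, not just $\ell=1$, and no concentration of the law of $a$ on full permutations is needed. Summing over $a\in\isd$ against the weight $N_{n-1}^{\rank(a)}$ and invoking the identity $N_n=\sum_{a\in\isd}N_{n-1}^{\rank(a)}$ gives the one-level contraction $\tfrac{R_n}{d^nN_n}\le r_n\,\tfrac{R_{n-1}}{d^{n-1}N_{n-1}}$ with $\limsup_n r_n\le 1/d$, hence exponential decay of $\mathbb{E}\xi_n$. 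If you replace your unresolved analysis of the ``coupled family'' $\set{W_n(\ell)}$ by this contraction, your argument closes.
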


\begin{proof} For a partial automorphism $y\in \pn{n}$, let $\eta_n(y)=\Xi_n(0)$ be the fraction of zero eigenvalues of $A_y$, and let $\xi_n(y) = 1-\eta_n(y)$ denote a fraction of non-zero eigenvalues. We have to prove that for a randomly chosen partial automorphism $y\in \mathcal{P}_n$ $$\eta_n(y) \overset{\mathbb P}{\longrightarrow} 1,\quad n\to\infty,
$$ or, equivalently,
$$\xi_n(y) \overset{\mathbb P}{\longrightarrow} 0,\quad n\to\infty.$$

Thanks to the Markov inequality, it is enough to show that $$\mathbb{E}\xi_n(y) \to 0, \quad n\to \infty.$$

For $y\in \mathcal{P}_n$, denote
$$
S(y) = \set{j: v^n_j\in \dom y^m\text{ for all }m\ge 1}
$$
the indices of vertices of the bottom level, which ``survive'' under the action of $y$, and define the \textit{ultimate rank} of $y$ by $\rk(y) = \# S(y)$. Then 
$$
\xi_n(y) = \frac{\rk(y)}{d^n},
$$
whence
$$
\mathbb{E}\xi_n(y)=\dfrac{R_n}{d^n N_n},
$$
where $R_n=\sum_{y\in \mathcal{I}_n}\rk(y)$.

Also define $\rank y= \# \left(\dom y\cap V^n\right)$. 

Recalling that 
$$
\mathcal P_n = \mathcal P_{n-1}\pwr \isd,
$$
we can identify $y\in \mathcal P_n$ with an element $a_y\in\isd$ and a collection $\{y_x\in \mathcal P_{n-1},x\in \dom a\}$.

We can write 
$$
R_n = \sum_{a\in\isd} R_n(a),\text{ where }R_n(a) =\sum_{y\in \mathcal P_n: a_y = a} \rk y.
$$


A partial transformation $a\in \isd$ is a product of disjoint cycles $(x_1\ldots x_k)$ and chains $[x_1\ldots x_k]$, that is $a(x_i) =x_{i+1}$, $1\le i\le k-1$ and $x_k\notin \dom a$. 

If $x$ belongs to a chain, then no elements survive under $x$.

If $x=x_1$ belongs to a cycle $(x_1,\dots,x_k)$, then the number of elements surviving under $x$ is 
$$
\rk (y_{x_1} \cdots y_{x_k}).
$$
As a result, if $a$ contains cycles $(x_{i1},\dots,x_{ic_i})$, $i=1,\dots r$, then
$$
\rk y = \sum_{i=1}^r c_i \rk\left( y_{x_{i1}}\cdots y_{x_{ic_i}}\right).
$$

Therefore, 
\begin{equation}\label{eq:Rn(a)}
R_n(a) = \sum_{i=1}^r c_i \sum_{y_{1},\dots,y_{c_i}\in \mathcal P_{n-1}} \rk\left(y_1\cdots y_{c_i}\right).
\end{equation}

For a convenience the rest of the proof is split into lemmas.

\begin{lem}\label{lem:lem1}
	For $n\geq 2$ $$R_n(a) \leq R_{n-1}\left(\frac{N_{n-1}}{d}\right)^{\rank (a)-1}\rank(a).$$
\end{lem}
\begin{proof}
The element $y_1$ can be decomposed into a product of idempotent $e_{y_1}$ on the domain of $y_1$ and an automorphism $\sigma_{y_1}$. Then
\begin{gather*}
\sum_{y_{1},\dots,y_{c_i}\in \mathcal P_{n-1}} \rk\left(y_1\cdots y_{c_i}\right)  = \sum_{y_{1},\dots,y_{c_i}\in \mathcal P_{n-1}} \rk\left(e_{y_1} \sigma_{y_1}y_2\cdots y_{c_i}\right).
\end{gather*}

Since $\sigma_{y_1}$ is an automorphism, then $\sigma_{y_1}y_2$ is a bijection on $\mathcal P_{n-1}$, so
\begin{gather*}
\sum_{y_{1},\dots,y_{c_i}\in \mathcal P_{n-1}} \rk\left(e_{y_1} \sigma_{y_1}y_2\cdots y_{c_i}\right) = \sum_{y_{1},\dots,y_{c_i}\in \mathcal P_{n-1}} \rk\left(e_{y_1}y_2\cdots y_{c_i}\right). 
\end{gather*}
Further $S(e_{y_1}y_2\cdots y_{c_i})\subset\dom y_1\cap S(y_2\cdots y_{c_i})$, therefore,
\begin{gather*}
\sum_{y_{1},\dots,y_{c_i}\in \mathcal P_{n-1}} \rk\left(e_{y_1}y_2\cdots y_{c_i}\right) \le \sum_{y_{1},\dots,y_{c_i}\in \mathcal P_{n-1}} \sum_{k=1}^{d^n}\mathbf{1}_{v^n_k\in \dom y_1} \mathbf{1}_{v_k^n \in S(y_2\cdots y_{c_i})}.
\end{gather*}

By symmetry, the sum $\sum_{y_1\in \mathcal P_{n-1}} \mathbf{1}_{v^n_k\in \dom y_1}$ does not depend on $k$. Hence \begin{gather*}\sum_{y_1\in \mathcal P_{n-1}} \mathbf{1}_{v^n_k\in \dom y_1}=\frac{1}{d^n}\sum_{j=1}^{d^n}\sum_{y_1\in \mathcal P_{n-1}}\mathbf{1}_{v^n_j\in \dom y_1}\\
=\frac{1}{d^n}\sum_{y_1\in \mathcal P_{n-1}}\sum_{j=1}^{d^n}\mathbf{1}_{v^n_j\in \dom y_1}= \frac{1}{d^n} \sum_{y\in \mathcal P_{n-1}} \rank(y) =: \frac{1}{d^n}R'_{n-1}.
\end{gather*}

Consequently,
\begin{gather*}
\sum_{y_{1},\dots,y_{c_i}\in \mathcal P_{n-1}} \rk\left(y_1\cdots y_{c_i}\right) \le \frac{R'_{n-1}}{d^n}  \sum_{y_{2},\dots,y_{c_i}\in \mathcal P_{n-1}} \rk\left(y_2\cdots y_{c_i}\right)\\
\le\dots \leq \left(\frac{R'_{n-1}}{d^n}\right)^{c_{i}-1} \sum_{y_{c_i}\in \mathcal P_{n-1}} \rk\left(y_{c_i}\right) = \left(\frac{R'_{n-1}}{d^n}\right)^{c_{i}-1} R_{n-1}.
\end{gather*}

Note that $R'_{n-1}\leq d N_{n-1}$, since rank of every element from $\mathcal{P}_{n-1}$ is not greater than $d^{n-1}$. 
Combining this with the above inequality, we get for $n\geq 2$
\begin{align*}
R_n(a)&\leq \sum_{i=1}^r c_i\left(\frac{R'_{n-1}}{d^n}\right)^{c_i-1}R_{n-1}\leq R_{n-1}\sum_{i=1}^r c_i\left(\frac{d^{n-1}N_{n-1}}{d^n}\right)^{c_i-1}\\
&=R_{n-1}\sum_{i=1}^r c_i \left(\frac{N_{n-1}}{d}\right)^{c_i-1}\leq R_{n-1}\sum_{i=1}^r c_i \left(\frac{N_{n-1}}{d}\right)^{\rank (a)-1}\\&
=R_{n-1}\left(\frac{N_{n-1}}{d}\right)^{\rank (a)-1}\sum_{i=1}^rc_i=R_{n-1}\left(\frac{N_{n-1}}{d}\right)^{\rank (a)-1}\rank(a).\qedhere
\end{align*}
\end{proof}

\begin{lem}\label{lem:lem2} For $n \to \infty$
	$$
	\frac{R_n}{d^n N_n}\leq r_n \frac{R_{n-1}}{d^{n-1} N_{n-1}}
	$$
	with $\limsup_{n\to\infty} r_n\leq 1/d$.
\end{lem}	

\begin{proof} Using Lemma~\ref{lem:lem1}, we get
	\begin{align*}
	\frac{R_n}{d^n N_n}&=\frac{1}{d^n N_n}\sum_{a\in \isd} R_n(a)\leq \frac{1}{d^n N_n}\sum_{a\in \isd} R_{n-1}\left(\frac{N_{n-1}}{d}\right)^{\rank (a)-1}\rank(a)\\&=\frac{R_{n-1}}{d^n N_n}\sum_{a\in \isd} \left(\frac{N_{n-1}}{d}\right)^{\rank (a)-1}\rank(a)=r_n\frac{R_{n-1}}{d^{n-1} N_{n-1}},
	\end{align*}
	where $$r_n=\frac{1}{dN_n}\sum_{a\in \isd} \frac{N_{n-1}^{\rank (a)}}{d^{\rank (a)-1}}\rank(a)\leq \frac{1}{dN_n}\sum_{a\in \isd} (N_{n-1})^{\rank (a)}=\frac{1}{d}.$$
	Here we have  used \eqref{eq:number} and the fact that for any $a\in \isd$, $\rank(a)\leq d^{\rank(a)-1}$.
\end{proof}

 As a result, 
$$
\frac{R_n}{d^n N_n}\to 0,\quad n\to\infty,
$$
exponentially fast. The proof of Theorem~\ref{thm:main} is now immediate.
\end{proof}

\end{document}